\newtheorem{theorem}{Theorem}
\newtheorem{corollary}[theorem]{Corollary}
\newtheorem{definition}[theorem]{Definition}
\newtheorem{problem}{Problem}
\newtheorem{remark}[theorem]{Remark}
\begin{document}

\title{Fractional Variational Calculus with Classical
and Combined Caputo Derivatives\thanks{Part
of the first author's Ph.D., which is carried out
at the University of Aveiro under the Doctoral Programme
\emph{Mathematics and Applications}
of Universities of Aveiro and Minho. Submitted 30-Nov-2010;
accepted 14-Jan-2011; for publication 
in \emph{Nonlinear Analysis Series A: Theory, Methods \& Applications}.}}

\author{Tatiana Odzijewicz$^{1}$\\
        \texttt{tatianao@ua.pt}
\and
        Agnieszka B. Malinowska$^{2}$\\
        \texttt{abmalinowska@ua.pt}
\and
        Delfim F. M. Torres$^{1}$\\
        \texttt{delfim@ua.pt}}

\date{$^1$Department of Mathematics\\
University of Aveiro\\
3810-193 Aveiro, Portugal\\[0.3cm]
$^2$Faculty of Computer Science\\
Bia{\l}ystok University of Technology\\
15-351 Bia\l ystok, Poland}

\maketitle


\begin{abstract}
We give a proper fractional extension of the classical calculus
of variations by considering variational functionals with a
Lagrangian depending on a combined Caputo fractional derivative
and the classical derivative. Euler--Lagrange equations to
the basic and isoperimetric problems are proved,
as well as transversality conditions.

\bigskip

\noindent \textbf{Keywords:}
fractional derivatives;
fractional variational analysis;
isoperimetric problems;
natural boundary conditions;
Euler--Lagrange equations.

\bigskip

\noindent \textbf{2010 Mathematics Subject Classification:}
49K05; 49K21; 26A33; 34A08.

\end{abstract}


\section{Introduction}

Fractional calculus (FC) is a generalization of (integer) differential calculus,
in the sense it deals with derivatives of real or complex order.
FC was born on 30th September 1695. On that day, L'H\^{o}pital
wrote a letter to Leibniz, where he asked about Leibniz's notation
of $n$th order derivatives of a linear function.
L'H\^{o}pital wanted to know the result for the derivative of order $n=1/2$.
Leibniz replied that ``\emph{one day, useful consequences will be drawn}''
and, in fact, his vision became a reality.  The study of non-integer order derivatives
rapidly became a very attractive subject to mathematicians,
and many different forms of fractional (\textrm{i.e.}, non-integer)
derivative operators were introduced:
the Grunwald--Letnikow, Riemann--Liouville, Caputo \cite{Hilfer,Kilbas,Podlubny},
and the more recent notions of Cresson \cite{Cresson}, Jumarie \cite{Jumarie},
or Klimek \cite{Klimek}.

The calculus of variations with fractional
derivatives was born in 1996 with the work of Riewe,
to better describe nonconservative systems in mechanics
\cite{CD:Riewe:1996,CD:Riewe:1997}. It is a subject
of strong current research due to its many applications
in science and engineering, including mechanics,
chemistry, biology, economics, and control theory
(see, \textrm{e.g.}, the recent papers
\cite{FrMult,MyID:154,MyID:147,MyID:152,MyID:179,FrTor,%
withBasiaRachid,NatFr,comDorota,MyID:181}).\footnote{The
literature on \emph{fractional variational calculus}
is vast, and we do not try to provide here a comprehensive
review on the subject. We give only some representative references
from 2010 and 2011. Other references can be found therein.}

Following \cite{Tatiana:Spain2010},
we consider here that the highest derivative
in the Lagrangian is of integer order.
The main advantage of our formulation,
with respect to the ``pure'' fractional
approach adopted in the literature,
is that the classical results of variational calculus
can now be obtained as particular cases. We recall that
the only possibility of obtaining the classical derivative $y'$ from
a fractional derivative $D^\alpha y$, $\alpha \in (0,1)$,
is to take the limit when $\alpha$ tends to one.
However, in general such a limit does not exist \cite{Ross:Samko:Love}.
Differently from \cite{Tatiana:Spain2010},
where the fractional problems are considered in the sense
of Riemann--Liouville, we consider here
combined Caputo derivatives $^{C}D^{\alpha,\beta}_{\gamma}$.
The operator $^{C}D^{\alpha,\beta}_{\gamma}$
extends the Caputo fractional derivatives,
and was introduced for the first time in \cite{ComCa}
as a useful tool in the description of some nonconservative models
and more general classes of variational problems.
More precisely, we investigate here problems of the calculus
of variations with integrands depending
on the independent variable $x$, an unknown vector-function $\textbf{y}$,
its integer order derivative $\textbf{y}'$,
and a fractional derivative $^{C}D^{\alpha,\beta}_{\gamma}\textbf{y}$
given as a convex combination
of the left Caputo fractional derivative of order $\alpha$
and the right Caputo fractional derivative of order $\beta$.

The paper is organized as follows. Section~\ref{sec:PFC}
presents the basic definitions and facts needed in the sequel.
Our results are then stated and proved in Section~\ref{sec-FrCVDI}.
We discuss the fundamental concepts of a variational calculus such as the
Euler--Lagrange equations for the basic (Theorem~\ref{theorem:EL})
and isoperimetric (Theorem~\ref{theorem:isop}) problems, as well as
transversality conditions (Theorem~\ref{theorem:nbc}).
We end with an illustrative example of the results
of the paper (Section~\ref{sec:ex}).


\section{Preliminaries on Fractional Calculus}
\label{sec:PFC}

In this section we present some basic necessary facts on fractional calculus.
For more on the subject and applications,
we refer the reader to the books \cite{Hilfer,Kilbas,Podlubny}.

\begin{definition}[Riemann--Liouville fractional integrals]
Let $f\in L_1\left([a,b]\right)$ and $0<\alpha<1$.
The left Riemann--Liouville Fractional Integral (RLFI)
of order $\alpha$ of a function $f$ is defined by
\begin{equation*}
_{a}\textsl{I}_x^\alpha f(x)
:=\frac{1}{\Gamma(\alpha)}\int_a^x(x-t)^{\alpha-1}f(t)dt,
\end{equation*}
and the right RLFI by
\begin{equation*}
_{x}\textsl{I}_b^\alpha f(x)
:=\frac{1}{\Gamma(\alpha)}\int_x^b(t-x)^{\alpha-1}f(t)dt,
\end{equation*}
for all $x\in[a,b]$.
\end{definition}

\begin{definition}[Left and right Riemann--Liouville fractional derivatives]
The left Riemann--Liouville Fractional Derivative (RLFD)
of order $\alpha$ of a function $f$, denoted by $_{a}\textsl{D}_x^\alpha f$,
is defined by
\begin{equation*}
_{a}\textsl{D}_x^\alpha f(x)
:= \frac{d}{dx}{_{a}}\textsl{I}_x^{1-\alpha}f(x)
=\frac{1}{\Gamma(1-\alpha)}\frac{d}{dx}\int_a^x(x-t)^{-\alpha}f(t)dt,
\end{equation*}
$x\in [a,b]$. Similarly, the right RLFD of order $\alpha$
of a function $f$, denoted by $_{x}\textsl{D}_b^\alpha f$,
is defined by
\begin{equation*}
_{x}\textsl{D}_b^\alpha f(x)
:= -\frac{d}{dx}{_{x}}\textsl{I}_b^{1-\alpha}f(x)
= \frac{-1}{\Gamma(1-\alpha)}\frac{d}{dx}\int_x^b(t-x)^{-\alpha}f(t)dt,
\end{equation*}
$x\in [a,b]$.
\end{definition}

\begin{definition}[Caputo fractional derivatives]
Let $f\in AC\left([a,b]\right)$, where $AC\left([a,b]\right)$
represents the space of absolutely continuous functions
on the interval $[a,b]$. The left Caputo Fractional Derivative (CFD) is defined by
\begin{equation*}
^{C}_{a}\textsl{D}_x^\alpha f(t)
:= {_{a}}\textsl{I}_x^{1-\alpha}\left(\frac{d}{dt}f\right)(x)
= \frac{1}{\Gamma(1-\alpha)}\int_a^x(x-t)^{-\alpha}\frac{d}{dt}f(t)dt,
\end{equation*}
$x\in [a,b]$, and the right CFD by
\begin{equation*}
_{x}\textsl{D}_b^\alpha f(x)
:= {_{x}}\textsl{I}_b^{1-\alpha}\left(-\frac{d}{dt}f\right)(x)
= \frac{-1}{\Gamma(1-\alpha)}\int_x^b(t-x)^{-\alpha}\frac{d}{dt}f(t)dt,
\end{equation*}
$x\in [a,b]$, where $\alpha$ is the order of the derivative.
\end{definition}

\begin{theorem}[Fractional integration by parts \cite{Kilbas}]
Let $p\geq1$, $q\geq1$, and $\frac{1}{p}+\frac{1}{q}\leq1+\alpha$.
If $g\in L_p\left([a,b]\right)$ and $f\in L_q\left([a,b]\right)$,
then the following formula for integration by parts holds:
$$
\int\limits_a^bg(x){_{a}}\textsl{I}_x^{\alpha}f(x)dx
=\int\limits_a^bf(x){_{x}}\textsl{I}_b^{\alpha}g(x)dx.
$$
\end{theorem}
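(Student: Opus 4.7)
\medskip

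\noindent\textbf{Proof plan.}
The plan is to unfold the left-hand side according to the definition of ${_a}\textsl{I}_x^\alpha$, swap the order of integration by Fubini's theorem, and then recognize the inner integral as the right Riemann--Liouville integral of $g$. Concretely, I would write
\[
\int_a^b g(x)\,{_a}\textsl{I}_x^\alpha f(x)\,dx
= \frac{1}{\Gamma(\alpha)}\int_a^b\!\!\int_a^x g(x)(x-t)^{\alpha-1} f(t)\,dt\,dx,
\]
observe that the domain of integration is the triangle $\{a\le t\le x\le b\}$, which can equally be parametrized as $\{a\le t\le b,\ t\le x\le b\}$, and then rewrite the double integral as
\[
\frac{1}{\Gamma(\alpha)}\int_a^b f(t)\!\int_t^b g(x)(x-t)^{\alpha-1}\,dx\,dt
= \int_a^b f(t)\,{_t}\textsl{I}_b^\alpha g(t)\,dt,
\]
which is the desired right-hand side after renaming $t$ as $x$.

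The only non-routine step is the justification of the Fubini interchange, which requires the absolute integrability
\[
\int_a^b\!\!\int_a^x |g(x)|\,(x-t)^{\alpha-1}|f(t)|\,dt\,dx < \infty.
\]
Here is where the hypotheses $g\in L_p$, $f\in L_q$, and $\tfrac{1}{p}+\tfrac{1}{q}\le 1+\alpha$ enter. I would handle this by combining two standard mapping results: the Hardy--Littlewood--Sobolev type bound giving ${_a}\textsl{I}_x^\alpha : L_q \to L_r$ (or $L_\infty$) with $\tfrac{1}{r}=\tfrac{1}{q}-\alpha$ when positive, and Hölder's inequality applied to $|g|\cdot {_a}\textsl{I}_x^\alpha|f|$. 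The condition $\tfrac{1}{p}+\tfrac{1}{r}\le 1$ is then exactly $\tfrac{1}{p}+\tfrac{1}{q}\le 1+\alpha$. The boundary cases ($\tfrac{1}{q}=\alpha$ or $\tfrac{1}{q}<\alpha$) are handled analogously, using the embedding of $L_q$ into larger $L_s$ spaces on the bounded interval $[a,b]$.

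The main obstacle is thus not the algebraic manipulation, which is essentially the Dirichlet trick for iterated integrals, but rather the verification of the integrability hypothesis needed for Fubini. Once the endpoint range $\tfrac{1}{p}+\tfrac{1}{q}\le 1+\alpha$ is seen to correspond exactly to the Young/Hölder condition that guarantees the absolute convergence of the weighted double integral, the symmetric form of the triangle $\{a\le t\le x\le b\}$ delivers the formula immediately. I would close by noting that this identity is the $L_p$--$L_q$ version of the classical adjoint relation between the left and right fractional integration operators.
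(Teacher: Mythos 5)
The paper offers no proof of this theorem: it is quoted verbatim from the book of Kilbas, Srivastava and Trujillo and used as a known tool, so there is no in-paper argument to compare yours against. Your plan is the standard proof of this identity and it is essentially correct: the Dirichlet interchange over the triangle $\{a\le t\le x\le b\}$ immediately converts the left integral into $\int_a^b f(t)\,{_t}\textsl{I}_b^{\alpha}g(t)\,dt$, and you correctly locate the only real content in the Tonelli/Fubini justification, where H\"older's inequality combined with the mapping property ${_a}\textsl{I}_x^{\alpha}:L_q\to L_r$, $\tfrac{1}{r}=\tfrac{1}{q}-\alpha$, yields precisely the stated condition $\tfrac{1}{p}+\tfrac{1}{q}\le 1+\alpha$. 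One caveat you should make explicit if you write this out in full: at the endpoint $\tfrac{1}{p}+\tfrac{1}{q}=1+\alpha$ the Hardy--Littlewood theorem fails when $p=1$ or $q=1$ (the fractional integral of an $L_1$ function lands only in weak $L_{1/(1-\alpha)}$), and the classical formulation in Samko--Kilbas--Marichev accordingly excludes $p=1$ and $q=1$ in the equality case --- a restriction the statement as reproduced in this paper silently omits. Apart from flagging that endpoint, your argument is complete in outline and is the $L_p$--$L_q$ adjointness proof one finds in the cited reference.
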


\begin{definition}[The combined fractional derivative
$^{C}D^{\alpha,\beta}_{\gamma}$ \cite{ComCa}]
Let $\alpha,\beta\in(0,1)$ and $\gamma\in[0,1]$.
The combined fractional derivative operator $^{C}D^{\alpha,\beta}_{\gamma}$
is given by
$$
^{C}D^{\alpha,\beta}_{\gamma}
:=\gamma ^{C}_{a}D^{\alpha}_{x}+(1-\gamma) ^{C}_{x}D^{\beta}_{b}.
$$
\end{definition}

\begin{remark}
\label{remarl:pc:comD}
The combined fractional derivative coincides
with the right CFD in the case $\gamma = 0$, \textrm{i.e.},
$^{C}D^{\alpha,\beta}_{0}f(x)= ^{C}_{x}D^{\alpha}_{b}f(x)$.
For $\gamma = 1$ one gets the left CFD:
$^{C}D^{\alpha,\beta}_{1}f(x)= ^{C}_{a}D^{\alpha}_{x}f(x)$.
\end{remark}

For $\textbf{f}=[f_1,\ldots,f_N]:[a,b]\rightarrow\mathbb{R}^N$,
$N\in\mathbb{N}$, and $f_i\in AC\left([a,b]\right)$,
$i=1,\ldots,N$, we put
\begin{equation*}
^{C}D^{\alpha,\beta}_{\gamma}\textbf{f}(x)
:=\left[^{C}D^{\alpha,\beta}_{\gamma}f_1(x),
\ldots, ^{C}D^{\alpha,\beta}_{\gamma}f_N(x)\right].
\end{equation*}

In the discussion to follow, we also need the following formula
for fractional integrations by parts \cite{ComCa}:
\begin{multline}
\label{eq:fracIP:C}
\int\limits_a^b g(x) ^{C}D^{\alpha,\beta}_{\gamma}f(x)dx
= \int\limits_a^b f(x)D^{\beta,\alpha}_{1-\gamma}g(x)dx\\
+\left[ \gamma f(x)_{x}\textsl{I}_b^{1-\alpha} g(x)
- \left(1-\gamma\right) f(x)_{a}\textsl{I}_x^{1-\beta} g(x)\right]_{x=a}^{x=b},
\end{multline}
where $D^{\beta,\alpha}_{1-\gamma}:=(1-\gamma)_{a}\textsl{D}_x^\beta
+\gamma _{x}\textsl{D}_b^\alpha$.


\section{Main Results}
\label{sec-FrCVDI}

Consider the following functional:
\begin{equation}
\label{eq:FU}
\mathcal{J}\textsl{}\left(\textbf{y}\right)
=\int\limits_a^b
L\left(x,\textbf{y}(x),\textbf{y}'(x),{^{C}}D^{\alpha,\beta}_{\gamma}\textbf{y}(x)\right)dx,
\end{equation}
where $x\in[a,b]$ is the independent variable;
$\textbf{y}(x)\in\mathbb{R}^N$ is a real vector variable;
$\textbf{y}'(x)\in\mathbb{R}^N$ with $\textbf{y}'$ the
first derivative of $\textbf{y}$; ${^{C}}D^{\alpha,\beta}_{\gamma}\textbf{y}(x)
\in\mathbb{R}^N$ stands for the combined fractional derivative of $\textbf{y}$
evaluated in $x$; and $L\in C^1\left([a,b]\times\mathbb{R}^{3N};\mathbb{R}\right)$.
Let $\textbf{D}$ denote the set of all functions
$\textbf{y}:[a,b]\rightarrow \mathbb{R}^N$ such that $\textbf{y}'$
and $^{C}D^{\alpha,\beta}_{\gamma}\textbf{y}$ exist and are continuous
on the interval $[a,b]$. We endow $\textbf{D}$ with the norm
\begin{equation*}
\left\|\textbf{y}\right\|_{1,\infty}
:= \max\limits_{a\leq x\leq b}\left\|\textbf{y}(x)\right\|
+\max\limits_{a\leq x\leq b}\left\|\textbf{y}'(x)\right\|
+\max\limits_{a\leq x\leq b}\left\| ^{C}D^{\alpha,\beta}_{\gamma}\textbf{y}(x)\right\|,
\end{equation*}
where $\left\|\cdot\right\|$ is a norm in $\mathbb{R}^N$.
Along the work, we denote by $\partial_{i}K$, $i=1,\ldots,M$ $(M\in\mathbb{N})$, the partial
derivative of a function $K:\mathbb{R}^M\rightarrow\mathbb{R}$ with respect to its $i$th argument.
Let $\lambda\in\mathbb{R}^r$. For simplicity of notation
we introduce the operators $\left[\cdot\right]^{\alpha,\beta}_{\gamma}$
and $_{\lambda}\left\{\cdot\right\}^{\alpha,\beta}_{\gamma}$ defined by
\begin{gather*}
\left[\textbf{y}\right]^{\alpha,\beta}_{\gamma}(x)
:=\left(x,\textbf{y}(x),\textbf{y}'(x),
^{C}D^{\alpha,\beta}_{\gamma}\textbf{y}(x)\right),\\
_{\lambda}\left\{\textbf{y}\right\}^{\alpha,\beta}_{\gamma}(x)
:=\left(x,\textbf{y}(x),\textbf{y}'(x),
^{C}D^{\alpha,\beta}_{\gamma}\textbf{y}(x),
\lambda_1,\ldots,\lambda_r\right).
\end{gather*}


\subsection{The Euler--Lagrange equation}
\label{sub:sec:FrIP}

We begin with the following problem of the fractional calculus of variations.
\begin{problem}
\label{problem:1} Find a function $y\in\textnormal{\textbf{D}}$ for
which the functional \eqref{eq:FU}, \textrm{i.e.},
\begin{equation}
\label{eq:Funct} \mathcal{J}\left(y\right)
=\int\limits_a^bL\left[y\right]^{\alpha,\beta}_{\gamma}(x)dx,
\end{equation}
subject to given boundary conditions
\begin{equation}
\label{eq:BoundCond} y(a)=\textbf{y}^a, \quad y(b) =\textbf{y}^b,
\end{equation}
$\textbf{y}^a,\textbf{y}^b\in\mathbb{R}^N$, achieves a minimum.
\end{problem}

\begin{definition}[Admissible function]
\label{adm:f} A function $y\in\textnormal{\textbf{D}}$ that
satisfies all the constraints of a problem is said to be admissible
to that problem. The set of admissible functions is denoted by
$\mathcal{D}$.
\end{definition}

\begin{remark}
For Problem~\ref{problem:1} the constraints
mentioned in Definition~\ref{adm:f} are the
boundary conditions \eqref{eq:BoundCond}.
\end{remark}

We now define what is meant by minimum of
$\mathcal{J}$ on $\mathcal{D}$.

\begin{definition}[Local minimizer]
A function $\overline{y}\in\mathcal{D}$ is said to be a local
minimizer to $\mathcal{J}$ on $\mathcal{D}$ if there exists some
$\delta>0$ such that
\begin{equation*}
\mathcal{J}\left(\overline{y}\right) -\mathcal{J}\left(y\right) \leq
0
\end{equation*}
for all functions $y\in\mathcal{D}$ with $\left\|y
-\overline{y}\right\|_{1,\infty}<\delta$.
\end{definition}

Similarly to the classical calculus of variations,
a necessary optimality condition to Problem~\ref{problem:1}
is based on the concept of variation.

\begin{definition}[First variation]
The first variation of $\mathcal{J}$ at
$y\in\textnormal{\textbf{D}}$ in the direction
$\textnormal{\textbf{h}}\in\textnormal{\textbf{D}}$ is defined by
\begin{equation*}
\delta \mathcal{J}\left(y;\textnormal{\textbf{h}}\right)
:=\lim\limits_{\epsilon\rightarrow 0}\frac{\mathcal{J}\left(y
+\epsilon
\textnormal{\textbf{h}}\right)-\mathcal{J}\left(y\right)}{\epsilon}
=\left.\frac{\partial}{\partial\epsilon}\textit{J}\left(y
+\epsilon\textnormal{\textbf{h}}\right)\right|_{\epsilon=0},
\end{equation*}
provided the limit exists.
\end{definition}

\begin{definition}[Admissible variation]
An admissible variation at $y\in\mathcal{D}$ for $\mathcal{J}$ is a
direction $\textnormal{\textbf{h}}\in\textnormal{\textbf{D}}$,
$\textnormal{\textbf{h}}\neq 0$, such that
\begin{itemize}
\item $\delta \mathcal{J}\left(y;
\textnormal{\textbf{h}}\right)$ exists; and
\item $y+\epsilon\textnormal{\textbf{h}}
\in\mathcal{D}$ for all sufficiently small $\epsilon$.
\end{itemize}
\end{definition}

\begin{theorem}[see, \textrm{e.g.}, \cite{Troutman}]
\label{thm:Troutman} Let $\mathcal{J}$ be a functional defined on
$\mathcal{D}$. Suppose that $y$ is a local minimizer to
$\mathcal{J}$ on $\mathcal{D}$. Then $\delta \mathcal{J}\left(y;
\textnormal{\textbf{h}}\right)=0$ for each admissible variation
$\textnormal{\textbf{h}}$ at $y$.
\end{theorem}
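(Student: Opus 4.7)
The plan is to reduce the statement to the classical Fermat theorem for functions of a single real variable. Fix an admissible variation $\mathbf{h}$ at $y$. By the definition of admissible variation, $y + \epsilon\mathbf{h} \in \mathcal{D}$ for every $\epsilon$ in some interval $(-\eta,\eta)$ around $0$, so the real-valued function
\begin{equation*}
\phi(\epsilon) := \mathcal{J}(y + \epsilon \mathbf{h})
\end{equation*}
is well-defined on $(-\eta,\eta)$.

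Next I would show that $\phi$ has a local minimum at $\epsilon = 0$. Since $y$ is a local minimizer on $\mathcal{D}$, there is $\delta > 0$ such that $\mathcal{J}(y) \leq \mathcal{J}(z)$ for every $z \in \mathcal{D}$ with $\|z - y\|_{1,\infty} < \delta$. For $z = y + \epsilon \mathbf{h}$, using absolute homogeneity of the norm together with linearity of ordinary differentiation and of the combined Caputo derivative $^{C}D^{\alpha,\beta}_{\gamma}$, one has $\|z - y\|_{1,\infty} = |\epsilon|\,\|\mathbf{h}\|_{1,\infty}$. Because $\mathbf{h}\neq 0$ is required in the definition of admissible variation, the ratio $\delta/\|\mathbf{h}\|_{1,\infty}$ makes sense, and shrinking $\eta$ to $\min\{\eta,\delta/\|\mathbf{h}\|_{1,\infty}\}$ gives $\phi(\epsilon) \geq \phi(0)$ for all $|\epsilon| < \eta$.

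To finish, I would invoke the standing hypothesis that $\delta\mathcal{J}(y;\mathbf{h})$ exists. By its definition in the paper, this first variation is precisely $\phi'(0)$. Since $\phi$ has a local minimum at the interior point $\epsilon = 0$ and is differentiable there, classical one-variable calculus (Fermat's theorem) forces $\phi'(0) = 0$, i.e.\ $\delta\mathcal{J}(y;\mathbf{h}) = 0$. There is no real obstacle: nothing fractional enters the argument, and the combined Caputo derivative is relevant only in verifying the homogeneity identity for the norm $\|\cdot\|_{1,\infty}$. The result is thus a direct transcription of the classical Troutman-style necessary condition to the present setting.
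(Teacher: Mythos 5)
Your argument is correct: reducing to the one-variable function $\phi(\epsilon)=\mathcal{J}(y+\epsilon\mathbf{h})$, checking via homogeneity of $\|\cdot\|_{1,\infty}$ that $\epsilon=0$ is an interior local minimum, and applying Fermat's theorem (the derivative $\phi'(0)=\delta\mathcal{J}(y;\mathbf{h})$ existing by the definition of admissible variation) is exactly the standard proof. Note that the paper itself offers no proof of this statement --- it is quoted from \cite{Troutman} as a known auxiliary result --- so your write-up simply supplies the classical argument that the cited reference contains, and it does so without gaps.
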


We now state and prove the Euler--Lagrange
equations for Problem~\ref{problem:1}.

\begin{theorem}
\label{theorem:EL} If $y=\left(y_1,\ldots,y_N\right)$ is a local
minimizer to Problem~\ref{problem:1}, then $y$ satisfies the system
of $N$ Euler--Lagrange equations
\begin{equation}
\label{eq:EL}
\partial_i L \left[y\right]^{\alpha,\beta}_{\gamma}(x)
-\frac{d}{dx}\partial_{N+i}L
\left[y\right]^{\alpha,\beta}_{\gamma}(x)
+D^{\beta,\alpha}_{1-\gamma}\partial_{2N+i}L
\left[y\right]^{\alpha,\beta}_{\gamma}(x)=0,
\end{equation}
$i=2,\ldots,N+1$, for all $x\in[a,b]$.
\end{theorem}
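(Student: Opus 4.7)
The plan is to use the standard variational argument adapted to the combined Caputo setting. I would start by fixing $y$ as a local minimizer and considering the family $y + \epsilon \mathbf{h}$, where the variation $\mathbf{h} = (h_1,\ldots,h_N) \in \mathbf{D}$ is required to vanish at the endpoints, $\mathbf{h}(a) = \mathbf{h}(b) = 0$. Because the boundary constraints \eqref{eq:BoundCond} are fixed, each such $\mathbf{h}$ is an admissible variation in the sense of the definition above, so Theorem~\ref{thm:Troutman} gives $\delta\mathcal{J}(y;\mathbf{h}) = 0$.

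Next I would compute $\delta\mathcal{J}(y;\mathbf{h})$ by differentiating under the integral sign. Since $L \in C^1$ and the operators $\frac{d}{dx}$ and $^{C}D^{\alpha,\beta}_{\gamma}$ are linear, this yields
\begin{equation*}
\delta\mathcal{J}(y;\mathbf{h}) = \int_a^b \sum_{i=1}^{N} \Bigl(\partial_{1+i}L\,[y]^{\alpha,\beta}_{\gamma}\, h_i + \partial_{N+1+i}L\,[y]^{\alpha,\beta}_{\gamma}\, h_i' + \partial_{2N+1+i}L\,[y]^{\alpha,\beta}_{\gamma}\,{^{C}}D^{\alpha,\beta}_{\gamma} h_i\Bigr)\,dx.
\end{equation*}
The central step is to transfer the derivatives off of $\mathbf{h}$ onto the coefficients. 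For the $h_i'$ terms I would apply classical integration by parts; for the $^{C}D^{\alpha,\beta}_{\gamma} h_i$ terms I would apply the fractional integration by parts formula \eqref{eq:fracIP:C} with $g = \partial_{2N+1+i}L\,[y]^{\alpha,\beta}_{\gamma}$ and $f = h_i$, which produces the operator $D^{\beta,\alpha}_{1-\gamma}$ acting on the partial derivative. In both cases the boundary terms contain $h_i$ evaluated at $x=a,b$ (the fractional boundary terms have the form $\gamma\, h_i\,{_{x}I_b^{1-\alpha}}g - (1-\gamma)\,h_i\,{_{a}I_x^{1-\beta}}g$ at the endpoints), so the choice $\mathbf{h}(a) = \mathbf{h}(b) = 0$ makes all boundary contributions vanish.

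After these manipulations the stationarity condition becomes
\begin{equation*}
\int_a^b \sum_{i=1}^{N} h_i(x)\,\Bigl(\partial_{1+i}L\,[y]^{\alpha,\beta}_{\gamma}(x) - \tfrac{d}{dx}\partial_{N+1+i}L\,[y]^{\alpha,\beta}_{\gamma}(x) + D^{\beta,\alpha}_{1-\gamma}\partial_{2N+1+i}L\,[y]^{\alpha,\beta}_{\gamma}(x)\Bigr)\,dx = 0
\end{equation*}
for every admissible $\mathbf{h}$. I would then invoke the fundamental lemma of the calculus of variations component-by-component: varying only $h_i$ and keeping the remaining components zero isolates each bracket, and continuity of the integrand allows us to conclude that each of the $N$ brackets vanishes pointwise on $[a,b]$. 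Relabeling the index shift $i \mapsto 1+i$ recovers the system \eqref{eq:EL} for $i = 2,\ldots,N+1$.

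The main obstacle is not a conceptual one but rather bookkeeping: one must verify that the fractional integration by parts \eqref{eq:fracIP:C} is applicable (which requires $h_i$ to have enough regularity to be in the relevant $L_q$ space and $\partial_{2N+1+i}L\,[y]^{\alpha,\beta}_{\gamma}$ in $L_p$, both of which follow from $\mathbf{h}\in\mathbf{D}$ and the $C^1$ assumption on $L$), and one must keep track of the boundary terms carefully to confirm they genuinely vanish under $\mathbf{h}(a)=\mathbf{h}(b)=0$. Once these points are checked, the fundamental lemma closes the argument in the usual way.
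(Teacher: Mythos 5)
Your proposal is correct and follows essentially the same route as the paper's own proof: compute the first variation for variations vanishing at the endpoints, apply classical integration by parts to the $h_i'$ terms and formula \eqref{eq:fracIP:C} to the $^{C}D^{\alpha,\beta}_{\gamma}h_i$ terms, observe that all boundary terms vanish, and conclude via the fundamental lemma applied componentwise. The only difference is cosmetic index bookkeeping, plus your (welcome but not essential) remark on checking the hypotheses of the fractional integration by parts formula.
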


\begin{proof}
Suppose that $y$ is a solution to Problem~\ref{problem:1} and let
$\textbf{h}$ be an arbitrary admissible variation for this problem,
\textrm{i.e.},
\begin{equation*}
h_i(a)=h_i(b)=0, \quad i=1,\ldots,N.
\end{equation*}
According with Theorem~\ref{thm:Troutman}, a necessary condition
for $\textbf{y}$ to be a minimizer is given by
\begin{equation*}
\frac{\partial}{\partial\epsilon}\mathcal{J}\left.\left(y
+\epsilon\textbf{h}\right)\right|_{\epsilon=0}=0,
\end{equation*}
that is,
\begin{equation}
\label{eq:1}
\begin{split}
& \int\limits_a^b\left[\sum\limits_{i=2}^{N+1}\partial_i L
\left[y\right]^{\alpha,\beta}_{\gamma}(x)h_{i-1}(x) +
\sum\limits_{i=2}^{N+1}\partial_{N+i}
L\left[y\right]^{\alpha,\beta}_{\gamma}(x)\frac{d}{dx}h_{i-1}(x)\right.\\
&\quad \left.+\sum\limits_{i=2}^{N+1}\partial_{2N+i}
L\left[y\right]^{\alpha,\beta}_{\gamma}(x)\left(^{C}D^{\alpha,
\beta}_{\gamma}h_{i-1}(x)\right)\right]dx=0.
\end{split}
\end{equation}
Using the integration by parts formulas, for the classical and
$^{C}D^{\alpha,\beta}_{\gamma}$ derivatives,
in the second and third term of the integrand, we obtain
\begin{equation}
\label{eq:2}
\begin{split}
&\int\limits_a^b\sum\limits_{i=2}^{N+1}\left[\partial_i
L\left[y\right]^{\alpha,\beta}_{\gamma}(x)
-\frac{d}{dx}\partial_{N+i} L
\left[y\right]^{\alpha,\beta}_{\gamma}(x) +
^{C}D^{\beta,\alpha}_{1-\gamma}\partial_{2N+i}
L\left[y\right]^{\alpha,\beta}_{\gamma}(x)\right]h_{i-1}(x)dx\\
&\quad+\left.\left[\sum\limits_{i=2}^{N+1}h_{i-1}(x)\partial_{N+i}
L\left[y\right]^{\alpha,\beta}_{\gamma}(x)\right]\right._{x=a}^{x=b}
+\gamma\left.\left[\sum\limits_{i=2}^{N+1}h_{i-1}(x)_{x}I^{1-\alpha}_{b}\partial_{2N+i}
L\left[y\right]^{\alpha,\beta}_{\gamma}(x)\right]\right._{x=a}^{x=b}\\
&
\quad-(1-\gamma)\left.\left[\sum\limits_{i=2}^{N+1}h_{i-1}(x)_{a}I^{1-\beta}_{x}\partial_{2N+i}
L\left[y\right]^{\alpha,\beta}_{\gamma}(x)\right]\right._{x=a}^{x=b}=0.
\end{split}
\end{equation}
Since $h_i(a)=h_i(b)=0$, $i=1,\ldots,N$, we get
\begin{equation*}
\int\limits_a^b\sum\limits_{i=2}^{N+1}\left[\partial_i L
\left[y\right]^{\alpha,\beta}_{\gamma}(x)
-\frac{d}{dx}\partial_{N+i}
L\left[y\right]^{\alpha,\beta}_{\gamma}(x) +
^CD^{\beta,\alpha}_{1-\gamma}\partial_{2N+i}
L\left[y\right]^{\alpha,\beta}_{\gamma}(x)\right]h_{i-1}(x)dx=0.
\end{equation*}
Equalities \eqref{eq:EL} follow from the application of the
fundamental lemma of the calculus of variations
(see, \textrm{e.g.}, \cite{Troutman}).
\end{proof}

When the Lagrangian $L$ does not depend on fractional derivatives,
then Theorem~\ref{theorem:EL} reduces to the classical result
(see, \textrm{e.g.}, \cite{Troutman}). The fractional Euler--Lagrange
equations via Caputo derivatives that one can find in the literature,
are also obtained as corollaries of Theorem~\ref{theorem:EL}.
The next result is obtained choosing a Lagrangian
that does not depend on the classical derivatives.

\begin{corollary}[Theorem~6 of \cite{ComCa}]
\label{Theo E-L1}
Let $\mathbf{y}=(y_1,\ldots,y_N)$ be a local
minimizer to problem
\begin{gather*}
\mathcal{J}(\mathbf{y})
=\int_a^b L\left(x,\mathbf{y}(x),{^CD^{\alpha,\beta}_{\gamma}} \mathbf{y}(x)\right)
\, dx \longrightarrow \min\\
\mathbf{y}(a)=\mathbf{y}^{a},
\quad \mathbf{y}(b)=\mathbf{y}^{b},
\end{gather*}
$\mathbf{y}^{a}$, $\mathbf{y}^{b}\in \mathbb{R}^N$.
Then, $\mathbf{y}$ satisfies the system of
$N$ fractional Euler--Lagrange equations
\begin{equation}
\label{E-L1}
\partial_i L[\mathbf{y}](x)+D^{\beta,\alpha}_{1-\gamma}
\partial_{N+i}L[\mathbf{y}](x)=0,
\end{equation}
$i=2,\ldots N+1$, for all $x\in[a,b]$.
\end{corollary}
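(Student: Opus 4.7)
The plan is to obtain the corollary as an immediate specialization of Theorem~\ref{theorem:EL} rather than to repeat the variational argument. The functional in the corollary is the particular case of \eqref{eq:Funct} in which the Lagrangian is independent of its $\mathbf{y}'$ slot.

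First, I would lift the Lagrangian $L:[a,b]\times\mathbb{R}^{2N}\to\mathbb{R}$ of the corollary to a Lagrangian $\widetilde L:[a,b]\times\mathbb{R}^{3N}\to\mathbb{R}$ of the form required by Problem~\ref{problem:1}, by setting $\widetilde L(x,u,v,w):=L(x,u,w)$. This $\widetilde L$ is of class $C^1$, and for every $\mathbf{y}\in\mathbf{D}$ one has $\widetilde L[\mathbf{y}]^{\alpha,\beta}_\gamma(x)=L(x,\mathbf{y}(x),{^CD^{\alpha,\beta}_\gamma}\mathbf{y}(x))$, so the two functionals agree on $\mathbf{D}$. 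Because the boundary conditions are the same and the norm $\|\cdot\|_{1,\infty}$ and the admissible set $\mathcal{D}$ are unchanged, any local minimizer $\mathbf{y}$ of the problem in the corollary is a local minimizer of the instance of Problem~\ref{problem:1} associated with $\widetilde L$.

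Second, I would apply Theorem~\ref{theorem:EL} to $\widetilde L$. By construction $\widetilde L$ does not depend on its $(N+i)$-th argument for $i=2,\ldots,N+1$, so $\partial_{N+i}\widetilde L\equiv 0$, and the middle term $\tfrac{d}{dx}\partial_{N+i}\widetilde L[\mathbf{y}]^{\alpha,\beta}_\gamma(x)$ of \eqref{eq:EL} vanishes identically. The surviving partial derivatives are related to $L$ by $\partial_i\widetilde L=\partial_i L$ for $i=2,\ldots,N+1$ and $\partial_{2N+i}\widetilde L=\partial_{N+i}L$ (the fractional-derivative slot is at position $N+i$ in $L$ but at position $2N+i$ in $\widetilde L$, due to the extra unused block). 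Substituting into \eqref{eq:EL} yields exactly \eqref{E-L1}.

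There is essentially no obstacle here; the only point requiring care is the bookkeeping of indices when translating between the $3N$-argument convention of Theorem~\ref{theorem:EL} and the $2N$-argument convention of the corollary, together with the trivial observation that $\partial_{N+i}\widetilde L\equiv 0$ makes $\tfrac{d}{dx}\partial_{N+i}\widetilde L$ vanish without any additional regularity assumption.
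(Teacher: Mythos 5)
Your proposal is correct and matches the paper's route: the paper obtains this corollary precisely by specializing Theorem~\ref{theorem:EL} to a Lagrangian that does not depend on the classical derivative slot, so the middle term of \eqref{eq:EL} drops out and the remaining indices shift exactly as you describe. The index bookkeeping ($\partial_{2N+i}\widetilde L=\partial_{N+i}L$) is the only delicate point and you have it right.
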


If one considers $\gamma = 1$ (\textrm{cf.} Remark~\ref{remarl:pc:comD})
and $N = 1$ in Corollary~\ref{Theo E-L1}, then \eqref{E-L1} reduces to
the well known Caputo fractional Euler--Lagrange equation:
if $y$ is a local minimizer to problem
\begin{gather*}
\mathcal{J}(y)=\int_a^b
L\left(x,y(x),{^C_a D_x^{\alpha}}y(x)\right) \, dx \longrightarrow \min\\
y(a) = y_a, \quad y(b) = y_b,
\end{gather*}
then $y$ satisfies the fractional Euler--Lagrange equation
\begin{equation}
\label{eq:fEL:cC}
\partial_2 L\left(x,y(x),{^C_a D_x^{\alpha}}y(x)\right)
+{_x D_b^{\alpha}}\partial_3 L\left(x,y(x),{^C_a D_x^{\alpha}} y(x)\right)  = 0
\end{equation}
for all $x\in[a,b]$ (see, \textrm{e.g.}, \cite{FrTor:Caputo}).


\subsection{Transversality conditions}

Let $l\in\left\{1,\ldots,N\right\}$.
Assume now that in Problem~\ref{problem:1}
the boundary conditions \eqref{eq:BoundCond}
are substituted by
\begin{equation}
\label{eq:BoundCond2}
\textbf{y}(a)=\textbf{y}^a, \quad y_i(b)=y^b_i,
\ i=1,\ldots,N \textnormal{ for } i\neq l,
\textnormal{ and } y_l(b) \textnormal{ is free}
\end{equation}
or
\begin{equation}
\label{eq:BoundCond3}
\textbf{y}(a)=\textbf{y}^a, \quad y_i(b)=y^b_i,
\ i=1,\ldots,N \textnormal{ for }
i\neq l, \textnormal{ and } y_l(b)\leq y_l^b.
\end{equation}

\begin{theorem}
\label{theorem:nbc} If $y=\left(y_1,\ldots,y_N\right)$ is a solution
to Problem~\ref{problem:1} with either \eqref{eq:BoundCond2} or
\eqref{eq:BoundCond3} as boundary conditions instead of
\eqref{eq:BoundCond}, then $y$ satisfies the system of
Euler--Lagrange equations \eqref{eq:EL}. Moreover, under the
boundary conditions \eqref{eq:BoundCond2} the extra transversality
condition
\begin{multline}
\label{eq:tc:1stc}
\left[\partial_{N+l+1}L\left[y\right]^{\alpha,\beta}_{\gamma}(x)
+\gamma_{x}I^{1-\alpha}_{b}\partial_{2N+l+1}
L\left[y\right]^{\alpha,\beta}_{\gamma}(x)\right.\\
\left.-(1-\gamma)_{a}I^{1-\beta}_{x}\partial_{2N+l+1}
L\left[y\right]^{\alpha,\beta}_{\gamma}(x)\right]_{x=b}=0
\end{multline}
holds; under the boundary conditions \eqref{eq:BoundCond3}
the extra transversality condition
\begin{multline}
\label{eq:tc:2ndc}
\left[\partial_{N+l+1}L\left[y\right]^{\alpha,\beta}_{\gamma}(x)
+\gamma_{x}I^{1-\alpha}_{b}\partial_{2N+l+1}
L\left[y\right]^{\alpha,\beta}_{\gamma}(x)\right.\\
\left.-(1-\gamma)_{a}I^{1-\beta}_{x}\partial_{2N+l+1}
L\left[y\right]^{\alpha,\beta}_{\gamma}(x)\right]_{x=b} \leq 0
\end{multline}
holds, with \eqref{eq:tc:1stc} taking place if $y_l(b)<y_l^b$.
\end{theorem}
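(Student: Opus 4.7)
The plan is to mirror the proof of Theorem~\ref{theorem:EL} essentially verbatim through equation~\eqref{eq:2}, and then exploit two different classes of admissible variations $\mathbf{h}$ to separate the interior Euler--Lagrange condition from the boundary contribution at $x=b$ in the $l$th component.

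First I would classify the admissible variations. Under either \eqref{eq:BoundCond2} or \eqref{eq:BoundCond3}, any admissible $\mathbf{h}\in\mathbf{D}$ must satisfy $h_i(a)=0$ for every $i$ and $h_i(b)=0$ for $i\neq l$. Under \eqref{eq:BoundCond2} the value $h_l(b)$ is completely unconstrained; under \eqref{eq:BoundCond3} the requirement $y_l(b)+\epsilon h_l(b)\le y_l^b$ for small $\epsilon>0$ leaves $h_l(b)$ unrestricted when $y_l(b)<y_l^b$, but forces $h_l(b)\le 0$ when $y_l(b)=y_l^b$.

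Second, I would derive the Euler--Lagrange system \eqref{eq:EL} by restricting to the subclass of variations that additionally satisfy $h_l(b)=0$. For such $\mathbf{h}$, all three boundary bracket terms in \eqref{eq:2} vanish, leaving only the interior integral; applying the fundamental lemma componentwise, exactly as in the proof of Theorem~\ref{theorem:EL}, yields \eqref{eq:EL}. This handles the Euler--Lagrange assertion in both boundary-condition settings.

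Third, I would extract the transversality conditions by returning to a general admissible $\mathbf{h}$. The interior integral in \eqref{eq:2} is now zero by \eqref{eq:EL}; the boundary contributions from indices $i\neq l+1$ vanish because of the zero conditions on $\mathbf{h}$; and the remaining boundary expression collapses to $h_l(b)$ times the bracket appearing in \eqref{eq:tc:1stc} evaluated at $x=b$ (the $x=a$ terms drop out since $h_l(a)=0$). Under \eqref{eq:BoundCond2}, $h_l(b)\in\mathbb{R}$ is arbitrary and Theorem~\ref{thm:Troutman} supplies $\delta\mathcal{J}(y;\mathbf{h})=0$, forcing the bracket to vanish; this is \eqref{eq:tc:1stc}. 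Under \eqref{eq:BoundCond3} with $y_l(b)=y_l^b$, only one-sided perturbations with $\epsilon>0$ and $h_l(b)\le 0$ are admissible, so minimality gives $\delta\mathcal{J}(y;\mathbf{h})\ge 0$; since this quantity equals $h_l(b)$ times the bracket with $h_l(b)\le 0$ otherwise arbitrary, the bracket must be $\le 0$, which is \eqref{eq:tc:2ndc}. When $y_l(b)<y_l^b$ both signs of $h_l(b)$ are admissible and equality in \eqref{eq:tc:2ndc} follows, as stated.

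The bulk of the argument is mechanical, reusing the computation already performed in Theorem~\ref{theorem:EL}; the real subtlety lies in the inequality case \eqref{eq:BoundCond3}. There one must be careful that Theorem~\ref{thm:Troutman} (written for two-sided admissible variations) is replaced by the one-sided statement $\delta\mathcal{J}(y;\mathbf{h})\ge 0$ along perturbations $y+\epsilon\mathbf{h}$ with $\epsilon\downarrow 0$, and then track that the sign restriction $h_l(b)\le 0$ flips the inequality in the final bracket to $\le 0$ rather than $\ge 0$.
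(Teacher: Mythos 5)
Your proposal is correct and follows essentially the same route as the paper: restrict to variations vanishing at both endpoints to recover the Euler--Lagrange system, then use variations with $h_l(b)\neq 0$ so that the surviving boundary term $h_l(b)$ times the bracket at $x=b$ yields \eqref{eq:tc:1stc} or \eqref{eq:tc:2ndc}. The only (immaterial) difference is the parametrization of the one-sided perturbation in case \eqref{eq:BoundCond3} with $y_l(b)=y_l^b$ --- you take $\epsilon>0$ with $h_l(b)\le 0$ where the paper takes $\epsilon\le 0$ with $h_l(b)\ge 0$ --- and your version is in fact spelled out more carefully than the paper's.
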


\begin{proof}
The fact that the system of Euler--Lagrange equations \eqref{eq:EL}
is satisfied is a simple consequence of the proof of Theorem~\ref{theorem:EL}
(one can always restrict to the subclass of functions
$\textnormal{\textbf{h}}\in\textnormal{\textbf{D}}$
for which $h_i(a)=h_i(b)=0$, $i=1,\ldots,N$).
Let us assume that the boundary conditions are \eqref{eq:BoundCond2}.
Condition \eqref{eq:tc:1stc} follows from \eqref{eq:1}.
Suppose now that the boundary conditions are \eqref{eq:BoundCond3}.
Then, there are two cases to consider.
(i) If $y_l(b)<y_l^b$, then there are admissible
neighboring paths with terminal value both above and below
$y_l(b)$, so that $h_l(b)$ can take either sign.
Therefore, the transversality condition is \eqref{eq:tc:1stc}.
(ii) Let $y_l(b)=y_l^b$. In this case neighboring paths
with terminal value $\tilde{y}_l\leq y_l(b)$ are considered.
Choose $h_l$ such that $h_l(b)\geq 0$. Then, $\epsilon \leq 0$
and the transversality condition, which has its root in the first order condition
\eqref{eq:2}, must be changed to an inequality. We obtain \eqref{eq:tc:2ndc}.
\end{proof}

When the Lagrangian does not depend on fractional derivatives,
then the left hand side of \eqref{eq:tc:1stc} and \eqref{eq:tc:2ndc}
reduce to the classical expression
$\partial_{N+l+1} L\left(x,\mathbf{y}(x),\mathbf{y}'(x)\right)$
(for instance, when $N = 1$ and $y(a)$ is fixed with $y(b)$ free,
then we get the well known boundary condition
$\partial_{3} L\left(b,y(b),y'(b)\right) = 0$).
In the particular case when the Lagrangian
does not depend on the classical derivatives,
$\gamma = 1$, $N = 1$,
and we have boundary conditions \eqref{eq:BoundCond2},
then one obtains from Theorem~\ref{theorem:nbc}
the following result of \cite{Agrawal}.

\begin{corollary}[\textrm{cf.} Theorem~1 of \cite{Agrawal}]
If $y$ is a local minimizer to problem
\begin{gather*}
\mathcal{J}(y)=\int_a^b L\left(x,y(x),{^C_a D_x^{\alpha}} y(x)\right) dx
\longrightarrow \min\\
y(a) = y_a \quad (y(b) \text{ is free}),
\end{gather*}
then $y$ satisfies the fractional Euler--Lagrange equation \eqref{eq:fEL:cC}.
Moreover,
\begin{equation*}
\left[{_x I_b^{1-\alpha}}\partial_3
L(x,y(x),{^C_a D_x^{\alpha}} y(x))\right]_{x=b}=0.
\end{equation*}
\end{corollary}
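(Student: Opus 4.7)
The plan is to obtain the corollary as an immediate specialization of Theorem~\ref{theorem:nbc}. The hypotheses correspond to the parameter choices $N=1$, $l=1$, $\gamma=1$, together with a Lagrangian that does not depend on its $y'$-argument, and boundary conditions of type \eqref{eq:BoundCond2} (namely $y(a)=y_a$ fixed and $y(b)$ free). By Remark~\ref{remarl:pc:comD}, $^{C}D^{\alpha,\beta}_{1} y = {^{C}_{a}D^{\alpha}_{x}} y$, so the functional in the corollary falls within the class treated by the theorem.

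First, I would reduce the Euler--Lagrange system \eqref{eq:EL} to equation \eqref{eq:fEL:cC}. Because $L$ is independent of $y'$, the $y'$-partial $\partial_{N+i}L$ in \eqref{eq:EL} vanishes identically, so the $\frac{d}{dx}$-term disappears. Setting $\gamma=1$ collapses the adjoint operator $D^{\beta,\alpha}_{1-\gamma} = (1-\gamma)\,{_{a}\textsl{D}^{\beta}_{x}} + \gamma\,{_{x}\textsl{D}^{\alpha}_{b}}$ to ${_{x}\textsl{D}^{\alpha}_{b}}$, and the remaining two terms of \eqref{eq:EL} become precisely \eqref{eq:fEL:cC}, once one re-indexes the four-slot Lagrangian $(x,y,y',z)$ of the theorem as the three-slot Lagrangian $(x,y,z)$ of the corollary, so that the theorem's $\partial_{2N+i}L = \partial_{4}L$ is identified with the corollary's $\partial_{3}L$.

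Second, I would specialize the transversality condition \eqref{eq:tc:1stc}. With $\gamma=1$ the $(1-\gamma)$-term drops out; since $L$ has no $y'$-dependence, the leading $\partial_{N+l+1}L = \partial_{3}L$ in the theorem's notation vanishes as well. Only the middle term survives, giving $\bigl[{_{x}\textsl{I}^{1-\alpha}_{b}}\,\partial_{2N+l+1}L\bigr]_{x=b} = 0$, which under the same re-indexing is exactly the stated boundary condition.

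The only subtlety is the slot-count mismatch between the two Lagrangians, and this is handled by viewing the corollary's $L(x,y,z)$ as a function of $(x,y,y',z)$ that is constant in $y'$; with that convention the reductions above are mechanical, and no further computation is required.
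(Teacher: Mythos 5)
Your proposal is correct and follows exactly the route the paper takes: the text preceding the corollary derives it from Theorem~\ref{theorem:nbc} by setting $N=1$, $\gamma=1$, taking a Lagrangian independent of the classical derivative, and using boundary conditions \eqref{eq:BoundCond2}. Your careful handling of the slot re-indexing ($\partial_{2N+l+1}L=\partial_4 L$ in the theorem's four-argument notation becoming $\partial_3 L$ in the corollary's three-argument notation) makes explicit what the paper leaves implicit.
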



\subsection{The isoperimetric problem}
\label{sub:sec:OC}

We now consider the following problem of the calculus of variations.

\begin{problem}
\label{problem:P2}
Minimize functional \eqref{eq:Funct} subject
to given boundary conditions \eqref{eq:BoundCond}
and $r$ isoperimetric constraints
\begin{equation}
\label{eq:BoundCond4} \mathcal{G}^j(y) =\int\limits_a^b
G^j\left[y\right]^{\alpha,\beta}_{\gamma}(x)dx =\xi_j, \quad
j=1,\ldots,r,
\end{equation}
where $G^j\in C^1\left([a,b]\times\mathbb{R}^{3N};\mathbb{R}\right)$
and $\xi_j\in\mathbb{R}$ for $j=1,\ldots,r$.
\end{problem}

Problems of the type of Problem~\ref{problem:P2},
where some integrals are to be given a fixed value
while another one is to be made a maximum or a minimum,
are called \emph{isoperimetric problems}.
Such variational problems have found a broad class of important applications
throughout the centuries, with numerous useful implications
in astronomy, geometry, algebra, analysis, and engineering.
For references and recent advancements on the subject,
we refer the reader to \cite{isoJMAA,isoNabla,iso:ts,specialAveiro2Basia}.
Here, in order to obtain necessary optimality conditions
for the combined fractional isoperimetric problem
(Problem~\ref{problem:P2}), we make use of the following theorem.

\begin{theorem}[see, \textrm{e.g.}, Theorem~2 of \cite{G:H} on p.~91]
\label{theorem:nec} Let
$\mathcal{J},\mathcal{G}^{1},\ldots,\mathcal{G}^r$ be functionals
defined in a neighborhood of $y$ and having continuous first
variations in this neighborhood. Suppose that $y$ is a local
minimizer to the isoperimetric problem given by \eqref{eq:Funct},
\eqref{eq:BoundCond} and \eqref{eq:BoundCond4}. Assume that there
are functions $\textnormal{\textbf{h}}^1,
\ldots,\textnormal{\textbf{h}}^r\in\textnormal{\textbf{D}}$ such
that
\begin{equation}
\label{eq:reg:cond}
A=(a_{kl}), \quad a_{kl}:=\delta\mathcal{G}^k(\textbf{y};\textbf{h}^l),
\text{ has maximal rank } r.
\end{equation}
Then there exist constants $\lambda_1,\ldots,\lambda_r \in\mathbb{R}$
such that the functional
\begin{equation*}
\mathcal{F}
:=\mathcal{J}-\sum\limits_{j=1}^{r}\lambda _{j}\mathcal{G}^{j}
\end{equation*}
satisfies
\begin{equation}
\label{eq:var}
\delta\mathcal{F}(\mathbf{y};\mathbf{h})=0
\end{equation}
for all $\mathbf{h}\in \mathbf{D}$.
\end{theorem}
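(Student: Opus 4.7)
The plan is to reduce the infinite-dimensional isoperimetric problem to a finite-dimensional Lagrange multiplier problem by embedding $y$ into an $(r+1)$-parameter family of admissible perturbations and then invoking the implicit function theorem. Throughout, I would use the hypothesis that $\mathcal{J}$ and the $\mathcal{G}^{j}$ possess continuous first variations in a neighborhood of $y$, so that all finite-dimensional maps considered below are of class $C^{1}$.

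First I would construct the multipliers. Define $(\lambda_{1},\ldots,\lambda_{r})\in\mathbb{R}^{r}$ as the unique solution of the $r\times r$ linear system
\[
\sum_{j=1}^{r}\delta\mathcal{G}^{j}(y;\mathbf{h}^{k})\,\lambda_{j}
=\delta\mathcal{J}(y;\mathbf{h}^{k}),\qquad k=1,\ldots,r,
\]
whose coefficient matrix is $A^{T}$ and is therefore invertible by \eqref{eq:reg:cond}. This choice forces $\delta\mathcal{F}(y;\mathbf{h}^{k})=0$ for every $k$; the real work of the proof is to extend this equality from the fixed directions $\mathbf{h}^{k}$ to an arbitrary $\mathbf{h}\in\mathbf{D}$.

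So fix $\mathbf{h}\in\mathbf{D}$ and consider the curve of perturbations $y(\epsilon_{0},\ldots,\epsilon_{r}):=y+\epsilon_{0}\mathbf{h}+\sum_{k=1}^{r}\epsilon_{k}\mathbf{h}^{k}$, which lies in $\mathbf{D}$ for all sufficiently small $(\epsilon_{0},\ldots,\epsilon_{r})\in\mathbb{R}^{r+1}$. Define $\Phi_{j}(\epsilon_{0},\ldots,\epsilon_{r}):=\mathcal{G}^{j}(y(\epsilon_{0},\ldots,\epsilon_{r}))-\xi_{j}$ for $j=1,\ldots,r$. Then $\Phi=(\Phi_{1},\ldots,\Phi_{r})$ is $C^{1}$ near the origin, vanishes there, and its Jacobian with respect to $(\epsilon_{1},\ldots,\epsilon_{r})$ at $0$ is the matrix $A$. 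The implicit function theorem therefore provides $C^{1}$ functions $\epsilon_{k}=\epsilon_{k}(\epsilon_{0})$, $k=1,\ldots,r$, with $\epsilon_{k}(0)=0$, such that $\Phi(\epsilon_{0},\epsilon_{1}(\epsilon_{0}),\ldots,\epsilon_{r}(\epsilon_{0}))=0$ on some interval around $0$. The corresponding curve of perturbations is admissible for Problem~\ref{problem:P2}, so the scalar function $\Psi(\epsilon_{0}):=\mathcal{J}(y(\epsilon_{0},\epsilon_{1}(\epsilon_{0}),\ldots,\epsilon_{r}(\epsilon_{0})))$ attains a local minimum at $\epsilon_{0}=0$, whence $\Psi'(0)=0$.

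To finish, I would differentiate both the constraint and $\Psi$ at $\epsilon_{0}=0$. Implicit differentiation of $\Phi\equiv 0$ yields $A\,(\epsilon_{1}'(0),\ldots,\epsilon_{r}'(0))^{T}=-(\delta\mathcal{G}^{1}(y;\mathbf{h}),\ldots,\delta\mathcal{G}^{r}(y;\mathbf{h}))^{T}$, while the chain rule gives
\[
0=\Psi'(0)=\delta\mathcal{J}(y;\mathbf{h})+\sum_{k=1}^{r}\epsilon_{k}'(0)\,\delta\mathcal{J}(y;\mathbf{h}^{k}).
\]
Substituting the expression for $\epsilon_{k}'(0)$ and using the defining relation for the $\lambda_{j}$ collapses this identity to $\delta\mathcal{J}(y;\mathbf{h})=\sum_{j=1}^{r}\lambda_{j}\,\delta\mathcal{G}^{j}(y;\mathbf{h})$, which is precisely \eqref{eq:var}. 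The main technical hurdle is securing the regularity required by the implicit function theorem at the level of the finite-dimensional reduction; this is exactly what the hypothesis of continuous first variations \emph{in a neighborhood} of $y$ (as opposed to just at $y$) is designed to provide.
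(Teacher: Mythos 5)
The paper offers no proof of Theorem~\ref{theorem:nec} to compare against: it is imported as a known result from \cite{G:H}, so your blind attempt is being measured against a citation rather than an argument. What you give is the standard proof of the abstract multiplier rule, and it is essentially correct: defining $\lambda$ by inverting $A^{T}$ on the data $\delta\mathcal{J}(y;\mathbf{h}^{k})$, embedding $y$ in the $(r+1)$-parameter family $y+\epsilon_{0}\mathbf{h}+\sum_{k}\epsilon_{k}\mathbf{h}^{k}$, solving the constraints for $\epsilon_{1},\ldots,\epsilon_{r}$ via the implicit function theorem (which is exactly where the rank condition \eqref{eq:reg:cond} and the continuity of the first variations in a neighborhood are consumed), and then combining $\Psi'(0)=0$ with implicit differentiation of the constraints does collapse to $\delta\mathcal{J}(y;\mathbf{h})=\sum_{j}\lambda_{j}\delta\mathcal{G}^{j}(y;\mathbf{h})$; I checked the linear algebra and it is right. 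One point you gloss over: you assert that the reduced curve is ``admissible for Problem~\ref{problem:P2},'' but admissibility also requires the boundary conditions \eqref{eq:BoundCond}, which are preserved only if $\mathbf{h}(a)=\mathbf{h}(b)=0$ and $\mathbf{h}^{k}(a)=\mathbf{h}^{k}(b)=0$; without this, $y$ being a minimizer tells you nothing about $\Psi$. So your argument really establishes \eqref{eq:var} for $\mathbf{h}$ ranging over the subspace of admissible variations rather than literally all of $\mathbf{D}$ --- which is in fact the most that can be true (already for $r=0$ the unrestricted claim would force natural boundary conditions in a fixed-endpoint problem), and it is all the paper uses in the proof of Theorem~\ref{theorem:isop}, where $\mathbf{h}(a)=\mathbf{h}(b)=0$ is imposed. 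With that restriction made explicit (and a sentence justifying that continuity of the first variations yields the joint $C^{1}$ regularity of $\Phi$ and $\Psi$), your proof is a complete and correct substitute for the external reference.
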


\begin{theorem}
\label{theorem:isop} Let assumptions of Theorem~\ref{theorem:nec}
hold. If $y$ is a local minimizer to Problem~\ref{problem:P2}, then
$y$ satisfies the system of $N$ fractional Euler--Lagrange equations
\begin{equation}
\label{eq:EL2}
\partial_i F _{\lambda}\left\{y\right\}_{\gamma}^{\alpha,\beta}(x)
-\frac{d}{dx}\partial_{N+i} F
_{\lambda}\left\{y\right\}_{\gamma}^{\alpha,\beta}(x)
+D^{\beta,\alpha}_{1-\gamma}\partial_{2N+i} F
_{\lambda}\left\{y\right\}_{\gamma}^{\alpha,\beta}(x)=0,
\end{equation}
$i=2,\ldots,N+1$, for all $x\in[a,b]$, where function
$F:[a,b]\times\mathbb{R}^{3N}\times\mathbb{R}^r\rightarrow\mathbb{R}$
is defined by
\begin{equation*}
F_{\lambda}\left\{y\right\}_{\gamma}^{\alpha,\beta}(x)
:=L\left[y\right]^{\alpha,\beta}_{\gamma}(x)
-\sum\limits_{j=1}^r\lambda_jG^j\left[y\right]^{\alpha,\beta}_{\gamma}(x).
\end{equation*}
\end{theorem}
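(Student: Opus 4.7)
The plan is to reduce the constrained isoperimetric problem to an unconstrained variational problem via the Lagrange multiplier rule, and then re-run the argument of Theorem~\ref{theorem:EL} for the augmented Lagrangian. First I would invoke Theorem~\ref{theorem:nec}: since $y$ is a local minimizer of $\mathcal{J}$ subject to the $r$ isoperimetric constraints $\mathcal{G}^{j}(y)=\xi_j$, and since the regularity hypothesis \eqref{eq:reg:cond} ensures that the matrix $A$ of first variations has maximal rank $r$, there exist Lagrange multipliers $\lambda_1,\ldots,\lambda_r\in\mathbb{R}$ such that the functional $\mathcal{F}=\mathcal{J}-\sum_{j=1}^{r}\lambda_j\mathcal{G}^{j}$ has vanishing first variation at $y$ in every direction $\mathbf{h}\in\mathbf{D}$.

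The second step is the observation that $\mathcal{F}$ has exactly the same structural form as $\mathcal{J}$, namely $\mathcal{F}(y)=\int_a^b F_{\lambda}\{y\}_{\gamma}^{\alpha,\beta}(x)\,dx$, where $F=L-\sum_{j=1}^{r}\lambda_j G^{j}$. Since $L$ and the $G^{j}$ are of class $C^1$, so is $F$, and its partial derivatives with respect to the first $3N+1$ arguments are the corresponding linear combinations of those of $L$ and $G^{j}$. This means I can replay essentially verbatim the computation \eqref{eq:1}--\eqref{eq:2} from the proof of Theorem~\ref{theorem:EL}, with $L$ replaced by $F$ throughout: expand $\delta\mathcal{F}(y;\mathbf{h})=0$, then apply the classical integration by parts to each term containing $h'_{i-1}$ and the fractional integration by parts formula \eqref{eq:fracIP:C} to each term containing $^{C}D^{\alpha,\beta}_{\gamma}h_{i-1}$.

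Restricting the test directions to those $\mathbf{h}\in\mathbf{D}$ with $h_i(a)=h_i(b)=0$ for $i=1,\ldots,N$ kills all the boundary contributions produced by both integration-by-parts operations, leaving
\[
\int_a^b \sum_{i=2}^{N+1}\left[\partial_i F - \frac{d}{dx}\partial_{N+i} F + D^{\beta,\alpha}_{1-\gamma}\partial_{2N+i}F\right]_{\lambda}\!\{y\}^{\alpha,\beta}_{\gamma}(x)\, h_{i-1}(x)\,dx = 0.
\]
A componentwise application of the fundamental lemma of the calculus of variations (choose $h_j\equiv 0$ for $j\neq i-1$ and let $h_{i-1}$ be an arbitrary continuous function vanishing at $a$ and $b$, for each $i=2,\ldots,N+1$) then yields the Euler--Lagrange system \eqref{eq:EL2}.

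The main obstacle is conceptual rather than computational: one must recognize that the hypotheses of Theorem~\ref{theorem:nec} are in force (this is exactly what the phrase ``Let assumptions of Theorem~\ref{theorem:nec} hold'' together with \eqref{eq:reg:cond} guarantees), and that the multiplier theorem is the right bridge from the isoperimetric formulation to a free variational formulation for $\mathcal{F}$. Once that reduction is made, no new analytic machinery is needed beyond what was already developed for Theorem~\ref{theorem:EL}, since the proof there used only the $C^1$ smoothness of the Lagrangian and the two integration by parts formulas, both of which apply equally well to $F$.
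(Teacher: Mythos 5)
Your proposal is correct and follows essentially the same route as the paper: invoke Theorem~\ref{theorem:nec} to obtain the multipliers and the vanishing first variation of $\mathcal{F}$, restrict to directions $\mathbf{h}$ vanishing at the endpoints, integrate by parts (classically and via \eqref{eq:fracIP:C}), and conclude with the fundamental lemma exactly as in Theorem~\ref{theorem:EL}. No discrepancies to report.
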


\begin{proof}
Under assumptions of Theorem~\ref{theorem:nec},
the equation \eqref{eq:var} is fulfilled for every
$\textnormal{\textbf{h}}\in\textbf{D}$.
Consider a function $\textbf{h}$ such that
$\textnormal{\textbf{h}}(a)=\textnormal{\textbf{h}}(b)=0$. Then,
\begin{equation*}
\begin{split}
0 &=\delta\mathcal{F}\left(y;\textnormal{\textbf{h}}\right)
=\left.\frac{\partial}{\partial\epsilon}\mathcal{F}\left(y
+\epsilon\textnormal{\textbf{h}}\right)\right|_{\epsilon=0}\\
&=\int_a^b\left[\sum\limits_{i=2}^{N+1}\partial_i\
F_{\lambda}\left\{y\right\}_{\gamma}^{\alpha,\beta}(x)h_{i-1}(x)\right.
+\sum\limits_{i=2}^{N+1}\partial_{N+i}
F_{\lambda}\left\{y\right\}_{\gamma}^{\alpha,\beta}(x)
\frac{d}{dx}h_{i-1}(x)\\
&\qquad\qquad\left.+\sum\limits_{i=2}^{N+1}\partial_{2N+i}
F_{\lambda}\left\{y\right\}_{\gamma}^{\alpha,\beta}(x)
^{C}D^{\alpha,\beta}_{\gamma}h_{i-1}(x)\right]dx.
\end{split}
\end{equation*}
Using the classical and the integration by parts formula \eqref{eq:fracIP:C},
and applying the fundamental lemma of the calculus of variations
in a similar way as in the proof of Theorem~\ref{theorem:EL},
we obtain \eqref{eq:EL2}.
\end{proof}

Suppose now, that constraints \eqref{eq:BoundCond4}
are characterized by inequalities
\begin{equation*}
\mathcal{G}(y) =\int\limits_a^b
G^j\left[y\right]^{\alpha,\beta}_{\gamma}(x)dx \leq \xi_j, \quad
j=1,\ldots,r.
\end{equation*}
In this case we can set
\begin{equation*}
\int\limits_a^b \left(G^j\left[y\right]^{\alpha,\beta}_{\gamma}(x)
-\frac{\xi_j}{b-a}\right)dx+\int\limits_a^b\left(\phi_j(x)\right)^2dx=0,
\end{equation*}
$j=1,\ldots,r$, where $\phi_j$ have the same continuity properties
as $y_i$. Therefore, we obtain the following problem: minimize the
functional
\begin{equation*}
\hat{\mathcal{J}}(y)
=\int\limits_a^b\hat{L}\left(x,\textbf{y}(x),y'(x),
^{C}D^{\alpha,\beta}_{\gamma}\textbf{y}(x),\phi(x)\right)dx,
\end{equation*}
where $\phi(x)=\left[\phi_1(x),\ldots,\phi_r(x)\right]$,
subject to $r$ isoperimetric constraints
\begin{equation*}
\int\limits_a^b \left[G^j\left[y\right]^{\alpha,\beta}_{\gamma}(x)
-\frac{\xi_j}{b-a}+\left(\phi_j(x)\right)^2\right]dx=0, \quad
j=1,\ldots,r,
\end{equation*}
and boundary conditions \eqref{eq:BoundCond}. Assuming that assumptions
of Theorem~\ref{theorem:isop} are satisfied,
we conclude that there exist constants
$\lambda_j\in\mathbb{R}$, $j=1,\ldots,r$,
for which the system of $N$ equations
\begin{equation}
\label{eq:EL3}
\begin{split}
&\partial_i\hat{F}\left(x,y(x),y'(x),
^{C}D^{\alpha,\beta}_{\gamma}\textbf{y}(x),\lambda_1,\ldots,\lambda_r,\phi(x)\right)\\
&\quad-\frac{d}{dx}\partial_{N+i}\hat{F}\left(x,y(x),y'(x),
^{C}D^{\alpha,\beta}_{\gamma}y(x),\lambda_1,\ldots,\lambda_r,\phi(x)\right)\\
&\quad+D^{\beta,\alpha}_{1-\gamma}\partial_{2N+i}
\hat{F}\left(x,y(x),y'(x),
^{C}D^{\alpha,\beta}_{\gamma}\textbf{y}(x),\lambda_1,\ldots,\lambda_r,\phi(x)\right)
=0,
\end{split}
\end{equation}
$i=2,\ldots,N+1$, where $\hat{F}=\hat{L}+\sum\limits_{j=1}^r
\lambda_j\left(G^j-\frac{\xi_j}{b-a}+\phi_j^2\right)$ and
\begin{equation}
\label{eq:6}
\lambda_j\phi_j(x)=0, \quad j=1,\ldots,r,
\end{equation}
hold for all $x\in[a,b]$. Note that it is enough to assume that the
regularity condition \eqref{eq:reg:cond} holds for the constraints
which are active at the local minimizer $\textbf{y}$. Indeed,
suppose that $l<r$ constraints, say
$\mathcal{G}^1,\ldots,\mathcal{G}^l$ for simplicity, are active at
the local minimizer $\textbf{y}$, and there are functions
$\textnormal{\textbf{h}}^1,
\ldots,\textnormal{\textbf{h}}^l\in\textnormal{\textbf{D}}$ such
that the matrix $B=(b_{kj})$, $b_{k,j}
:=\delta\mathcal{G}^k\left(y;\textnormal{\textbf{h}}^j\right)$,
$k,j=1,\dots,l<r$ has maximal rank $l$. Since the inequality
constraints $\mathcal{G}^{l+1},\ldots,\mathcal{G}^r$ are inactive,
the condition \eqref{eq:6} is trivially satisfied by taking
$\lambda_{l+1}=\ldots=\lambda_r=0$. On the other hand, since the
inequality constraints $\mathcal{G}^1,\ldots,\mathcal{G}^l$ are
active and satisfy the regularity condition \eqref{eq:reg:cond} at
$\textbf{y}$, the conclusion that there exist constants
$\lambda_j\in\mathbb{R}$, $j=1,\ldots,r$, such that \eqref{eq:EL3}
holds, follow from Theorem~\ref{theorem:isop}. Moreover,
\eqref{eq:6} is trivially satisfied for $j=1,\ldots,l$.


\section{An Illustrative Example}
\label{sec:ex}

Let $\alpha\in\left(0,1\right)$, $N = 1$, $\gamma=1$,
and $\xi\in\mathbb{R}$. Consider the following
fractional isoperimetric problem:
\begin{equation}
\label{eq:ex}
\begin{gathered}
\mathcal{J}(y)=\int_0^1\left(y'(x)
+ \, {^C_{0}\textsl{D}_x^{\alpha}} y(x)\right)^2dx \longrightarrow \min\\
\mathcal{G}(y)=\int_0^1\left(y'(x)
+ \, {^C_{0}\textsl{D}_x^{\alpha}} y(x)\right)dx = \xi\\
\begin{split}
y(0)=0\, , \
y(1)=&\int_0^1 E_{1-\alpha}\left(-\left(1-t\right)^{1-\alpha}\right)\xi dt.
\end{split}
\end{gathered}
\end{equation}
In this problem we make use of the Mittag-Leffler function
$E_{\alpha}(z)$. We recall that the Mittag-Leffler function is
defined by
\begin{equation*}
E_{\alpha}(z) =\sum_{k=0}^\infty\frac{z^k}{\Gamma(\alpha k+1)}\, .
\end{equation*}
This function appears naturally in the solution
of fractional differential equations,
as a generalization of the exponential function
\cite{CapelasOliveira}.
Indeed, while a linear second order
ordinary differential equation
with constant coefficients presents an exponential function as solution,
in the fractional case the Mittag--Leffler functions emerge \cite{Kilbas}.

In our example \eqref{eq:ex} the function $F$
of Theorem~\ref{theorem:isop} is given by
$$
F(x,y,y', {^C_{0}\textsl{D}_x^{\alpha}} y,\lambda)
=\left(y'+{^C_{0}\textsl{D}_x^{\alpha}} y\right)^2 -\lambda \left(y'
+ {^C_{0}\textsl{D}_x^{\alpha}} y\right).
$$
One can easily check that $y$ such that
\begin{equation}
\label{eq:y:ex} y(x)=\int_0^x
E_{1-\alpha}\left(-\left(x-t\right)^{1-\alpha}\right) \xi dt
\end{equation}
\begin{itemize}
\item is not an extremal for $\mathcal{G}$;
\item satisfies $y'+ {^C_{0}\textsl{D}_x^{\alpha}}y= \xi$
(see, \textrm{e.g.}, \cite[p.~328, Example~5.24]{Kilbas}).
\end{itemize}
Moreover, \eqref{eq:y:ex} satisfies the Euler--Lagrange equations
\eqref{eq:EL2} for $\lambda=2\xi$, \textrm{i.e.},
$$
-\frac{d}{dx}\left(2\left(y' + {^C_{0}\textsl{D}_x^{\alpha}}
y\right) -2\xi\right) + {_{x}\textsl{D}_1^{\alpha}}\left(2\left(y'+
{^C_{0}\textsl{D}_x^{\alpha}} y\right) -2\xi\right)=0.
$$
We conclude that \eqref{eq:y:ex} is an extremal for the
isoperimetric problem \eqref{eq:ex}.

\begin{remark}
When $\alpha \rightarrow 1$ the isoperimetric constraint is
redundant with the boundary conditions, and the fractional
isoperimetric problem \eqref{eq:ex} simplifies to the classical
variational problem
\begin{equation}
\label{eq:ex:alpha1}
\begin{gathered}
\mathcal{J}(y)
=4\int_0^1 (y'(x))^2 dx \longrightarrow \min\\
y(0)=0  \, , \quad y(1) = \frac{\xi}{2}.
\end{gathered}
\end{equation}
Our fractional extremal \eqref{eq:y:ex} gives $y(x)=\frac{\xi}{2}x$
for $i=1,\ldots,N$, which is exactly the minimizer of
\eqref{eq:ex:alpha1}.
\end{remark}

\begin{remark}
Choose $\xi = 1$. When $\alpha \rightarrow 0$
one gets from \eqref{eq:ex} the classical isoperimetric problem
\begin{equation}
\label{eq:ex:alpha0}
\begin{gathered}
\mathcal{J}(y) =\int_0^1\left(y'(x)
+ y(x)\right)^2 dx \longrightarrow \min\\
\mathcal{G}(y)
=\int_0^1 y(x) dx = \frac{1}{e}\\
y(0)=0 \quad y_i(1)= 1-\frac{1}{\mathrm{e}}.
\end{gathered}
\end{equation}
Our extremal \eqref{eq:y:ex} is then reduced to the classical
extremal $y(x)=1 - \mathrm{e}^{-x}$ of the isoperimetric problem
\eqref{eq:ex:alpha0}.
\end{remark}

\begin{remark}
Let $\alpha=\frac{1}{2}$.
Then \eqref{eq:ex} gives the following
fractional isoperimetric problem:
\begin{equation}
\label{eq:ex:alpha=1/2}
\begin{gathered}
\mathcal{J}(y)=\int_0^1\left(y'(x)
+  {^C_{0}\textsl{D}_x^{\frac{1}{2}}} y(x)\right)^2 dx\longrightarrow \min\\
\mathcal{G}(y)=\int_0^1\left(y'(x)
+ {^C_{0}\textsl{D}_x^{\frac{1}{2}}} y(x)\right)dx=\xi \\
y(0) =0\, , \quad y(1) = \xi\left(
\mathrm{erfc}(1)\mathrm{e}+\frac{2}{\sqrt{\pi}}-1\right),
\end{gathered}
\end{equation}
where $\mathrm{erfc}$ is the complementary error function defined by
\begin{equation*}
\mathrm{erfc}(z)=\frac{2}{\sqrt{\pi}}\int_{z}^{\infty}exp(-t^2)dt.
\end{equation*}
The extremal \eqref{eq:y:ex} for the particular fractional
isoperimetric problem \eqref{eq:ex:alpha=1/2} is
$$
y(x)=\xi\left(\mathrm{e}^x \mathrm{erfc}(\sqrt{x})
+\frac{2\sqrt{x}}{\sqrt{\pi}}-1\right).
$$
\end{remark}


\section*{Acknowledgements}

This work was partially supported by the
\emph{Portuguese Foundation for Science and Technology} (FCT)
through the \emph{Center for Research and Development in Mathematics and Applications} (CIDMA).
TO is also supported by FCT through the Ph.D. fellowship SFRH/BD/33865/2009;
ABM by BUT grant S/WI/1/08; and DFMT through the project UTAustin/MAT/0057/2008.




\begin{thebibliography}{99}

\bibitem{Agrawal}
O. P. Agrawal,
Generalized Euler-Lagrange equations and transversality conditions
for FVPs in terms of the Caputo derivative,
J. Vib. Control {\bf 13} (2007), no.~9-10, 1217--1237.

\bibitem{FrMult}
R. Almeida, A. B. Malinowska\ and\ D. F. M. Torres,
A fractional calculus of variations for multiple integrals
with application to vibrating string,
J. Math. Phys. {\bf 51} (2010), no.~3, 033503, 12pp.
{\tt arXiv:1001.2722}

\bibitem{isoJMAA}
R. Almeida\ and\ D. F. M. Torres,
H\"olderian variational problems subject to integral constraints,
J. Math. Anal. Appl. {\bf 359} (2009), no.~2, 674--681.
{\tt arXiv:0807.3076}

\bibitem{isoNabla}
R. Almeida\ and\ D. F. M. Torres,
Isoperimetric problems on time scales with nabla derivatives,
J. Vib. Control {\bf 15} (2009), no.~6, 951--958.
{\tt arXiv:0811.3650}

\bibitem{MyID:154}
R. Almeida\ and\ D. F. M. Torres,
Leitmann's direct method for fractional optimization problems,
Appl. Math. Comput. {\bf 217} (2010), no.~3, 956--962.
{\tt arXiv:1003.3088}

\bibitem{MyID:147}
R. Almeida\ and\ D. F. M. Torres,
Necessary and sufficient conditions for the fractional
calculus of variations with Caputo derivatives,
Commun. Nonlinear Sci. Numer. Simul. {\bf 16} (2011), no.~3, 1490--1500.
{\tt arXiv:1007.2937}

\bibitem{MyID:152}
N. R. O. Bastos, R. A. C. Ferreira\ and\ D. F. M. Torres,
Necessary optimality conditions for fractional
difference problems of the calculus of variations,
Discrete Contin. Dyn. Syst. {\bf 29} (2011), no.~2, 417--437.
{\tt arXiv:1007.0594}

\bibitem{MyID:179}
N. R. O. Bastos, R. A. C. Ferreira\ and\ D. F. M. Torres,
Discrete-time fractional variational problems,
Signal Process. {\bf 91} (2011), no.~3, 513--524.
{\tt arXiv:1005.0252}

\bibitem{CapelasOliveira}
R. F. Camargo, A. O. Chiacchio, R. Charnet, E. Capelas de Oliveira,
Solution of the fractional Langevin equation and the Mittag-Leffler functions,
J. Math. Phys. {\bf 50} (2009), no.~6, 063507, 8 pp.

\bibitem{Cresson}
J. Cresson,
Fractional embedding of differential operators and Lagrangian systems,
J. Math. Phys. {\bf 48} (2007), no.~3, 033504, 34 pp.

\bibitem{iso:ts}
R. A. C. Ferreira\ and\ D. F. M. Torres,
Isoperimetric problems of the calculus of variations on time scales,
in Nonlinear Analysis and Optimization II
(eds: A.~Leizarowitz, B.~S.~Mordukhovich, I.~Shafrir, and A.~J.~Zaslavski),
Contemporary Mathematics, vol.~514, Amer. Math. Soc., Providence, RI, 2010, pp.~123--131.
{\tt arXiv:0805.0278}

\bibitem{FrTor:Caputo}
G. S. F. Frederico\ and\ D. F. M. Torres,
Fractional optimal control in the sense of Caputo
and the fractional Noether's theorem,
Int. Math. Forum {\bf 3} (2008), no.~9-12, 479--493.
{\tt arXiv:0712.1844}

\bibitem{FrTor}
G. S. F. Frederico\ and\ D. F. M. Torres,
Fractional Noether's theorem in the Riesz-Caputo sense,
Appl. Math. Comput. {\bf 217} (2010), no.~3, 1023--1033.
{\tt arXiv:1001.4507}

\bibitem{G:H}
M. Giaquinta\ and\ S. Hildebrandt,
{\it Calculus of variations. I}, Springer, Berlin, 1996.

\bibitem{Hilfer}
R. Hilfer,
{\it Applications of Fractional Calculus in Physics},
World Scientific, River Edge, NJ, 2000.

\bibitem{Jumarie}
G. Jumarie,
Table of some basic fractional calculus formulae derived
from a modified Riemann-Liouville derivative for non-differentiable functions,
Appl. Math. Lett. {\bf 22} (2009), no.~3, 378--385.

\bibitem{Kilbas}
A. A. Kilbas, H. M. Srivastava\ and\ J. J. Trujillo,
{\it Theory and Applications of Fractional Differential Equations},
Elsevier, Amsterdam, 2006.

\bibitem{Klimek}
M. Klimek,
Stationarity-conservation laws for fractional
differential equations with variable coefficients,
J. Phys. A {\bf 35} (2002), no.~31, 6675--6693.

\bibitem{withBasiaRachid}
A. B. Malinowska, M. R. Sidi Ammi\ and\ D. F. M. Torres,
Composition functionals in fractional calculus of variations,
Commun. Frac. Calc. {\bf 1} (2010), no.~1, 32--40.
{\tt arXiv:1009.2671}

\bibitem{specialAveiro2Basia}
A. B. Malinowska\ and\ D. F. M. Torres,
Necessary and sufficient conditions for local Pareto optimality on time scales,
J. Math. Sci. (N. Y.) {\bf 161} (2009), no.~6, 803--810.
{\tt arXiv:0801.2123}

\bibitem{NatFr}
A. B. Malinowska\ and\ D. F. M. Torres,
Generalized natural boundary conditions for fractional
variational problems in terms of the Caputo derivative,
Comput. Math. Appl. {\bf 59} (2010), no.~9, 3110--3116.
{\tt arXiv:1002.3790}

\bibitem{ComCa}
A. B. Malinowska\ and\ D. F. M. Torres,
Fractional variational calculus in terms
of a combined Caputo derivative,
Proceedings of FDA'10,
The 4th IFAC Workshop on Fractional Differentiation and its Applications,
Badajoz, Spain, October 18-20, 2010
(Eds: I.~Podlubny, B.~M.~Vinagre Jara, YQ.~Chen,
V.~Feliu Batlle, I.~Tejado Balsera),
Article no.~FDA10-084, 2010, 6~pp.
{\tt arXiv:1007.0743}

\bibitem{comDorota}
D. Mozyrska\ and\ D. F. M. Torres,
Minimal modified energy control for fractional
linear control systems with the Caputo derivative,
Carpathian J. Math. {\bf 26} (2010), no.~2, 210--221.
{\tt arXiv:1004.3113}

\bibitem{MyID:181}
D. Mozyrska\ and\ D. F. M. Torres,
Modified optimal energy and initial memory
of fractional continuous-time linear systems,
Signal Process. {\bf 91} (2011), no.~3, 379--385.
{\tt arXiv:1007.3946}

\bibitem{Tatiana:Spain2010}
T. Odzijewicz\ and\ D. F. M. Torres,
Calculus of variations with fractional and classical derivatives,
Proceedings of FDA'10,
The 4th IFAC Workshop on Fractional Differentiation and its Applications,
Badajoz, Spain, October 18-20, 2010
(Eds: I.~Podlubny, B.~M.~Vinagre Jara, YQ.~Chen,
V.~Feliu Batlle, I.~Tejado Balsera).
Article no.~FDA10-076, 2010, 5~pp.
{\tt arXiv:1007.0567}

\bibitem{Podlubny}
I. Podlubny, {\it Fractional Differential Equations}, Academic
Press, San Diego, CA, 1999.

\bibitem{CD:Riewe:1996}
F. Riewe,
Nonconservative Lagrangian and Hamiltonian mechanics,
Phys. Rev. E (3) {\bf 53} (1996), no.~2, 1890--1899.

\bibitem{CD:Riewe:1997}
F. Riewe,
Mechanics with fractional derivatives,
Phys. Rev. E (3) {\bf 55} (1997), no.~3, part B, 3581--3592.

\bibitem{Ross:Samko:Love}
B. Ross, S. G. Samko\ and\ E. R. Love,
Functions that have no first order derivative might have fractional derivatives
of all orders less than one,
Real Anal. Exchange {\bf 20} (1994/95), no.~1, 140--157.

\bibitem{Troutman}
J. L. Troutman
{\it Variational Calculus and Optimal Control},
Second Edition, Springer, New York, 1996.

\end{thebibliography}
\end{document}